\documentclass{article}
\usepackage{graphicx, amscd, amsmath,amssymb, amstext,amsthm,amsfonts, enumitem, comment, breqn}
\usepackage{mathrsfs}
\usepackage[utf8]{inputenc}
\usepackage[T1]{fontenc}
\usepackage{hyperref}
\usepackage[a4paper, left=2.6cm, right=2.6cm, top=3cm, bottom=2.5cm]{geometry}
\usepackage{authblk}
\usepackage{marvosym}
\usepackage{xcolor}
\usepackage{caption}

\newcommand{\eps}{\varepsilon}

\newcommand{\SU}{{\mathcal U}}

\newcommand{\N}{\mathbb{N}}

\newcommand{\R}{\mathbb{R}}

\newcommand{\seqx}{\underline{x}}

\newcommand{\seqz}{\underline{z}}
\newcommand{\orbx}{\seqx_T}

\newcommand{\dist}{\rho}

\newcommand{\MT}{\mathcal{M}_T}

\newtheorem{thm}{Theorem}[section] 
\newtheorem{cor}[thm]{Corollary}
\newtheorem{prop}[thm]{Proposition}

\newtheorem{question}[thm]{Question}

\theoremstyle{definition}
\newtheorem{defn}[thm]{Definition}

\newtheorem{example}[thm]{Example}

\title{The interplay between partial specification, average shadowing, and Besicovitch completeness}
\author{Melih Emin Can, Marcin Kulczycki}
\date{\today}

\begin{document}
\maketitle

\begin{abstract}
Let $(X,T)$ be a compact dynamical system. This article proves that if $(X,T)$ has the partial specification property, then it has the average shadowing property. It is also proven that if $(X,T)$ is surjective and has the partial specification property, then the set of ergodic measures of $(X,T)$ is dense in the space of its invariant measures. An example of a compact dynamical system that is not Besicovitch complete is also given.
\end{abstract}

\section{Introduction}

The specification property was introduced by Rufus Bowen in 1971 in \cite{Bowen}. The main idea of this new property was, roughly speaking, for finite collections of finite orbits to be traced by a true orbit of the system as long as the separation between finite orbits was large enough. It was initially used in the context of Axiom A diffeomorphisms.

Systems possessing this property are, by necessity, complex and chaotic in nature - it is known that, for example, they have to be topologically mixing and they have positive topological entropy. The flagship examples of systems with specification are mixing interval maps and mixing shifts of finite type.

Over time, it became clear that the very useful notion of specification is limited by the extent of the family of maps possessing it. Several authors have successfully defined its subtler, weaker variants that would apply to a wider range of systems. We refer the reader to \cite{KLO} for a comprehensive overview, while reproducing below a graph from that paper showcasing various variants of specification. It is now known that directed paths in this graph represent the only implications between these properties that hold.

\begin{figure}
    \centering
    \includegraphics[width=0.9\linewidth]{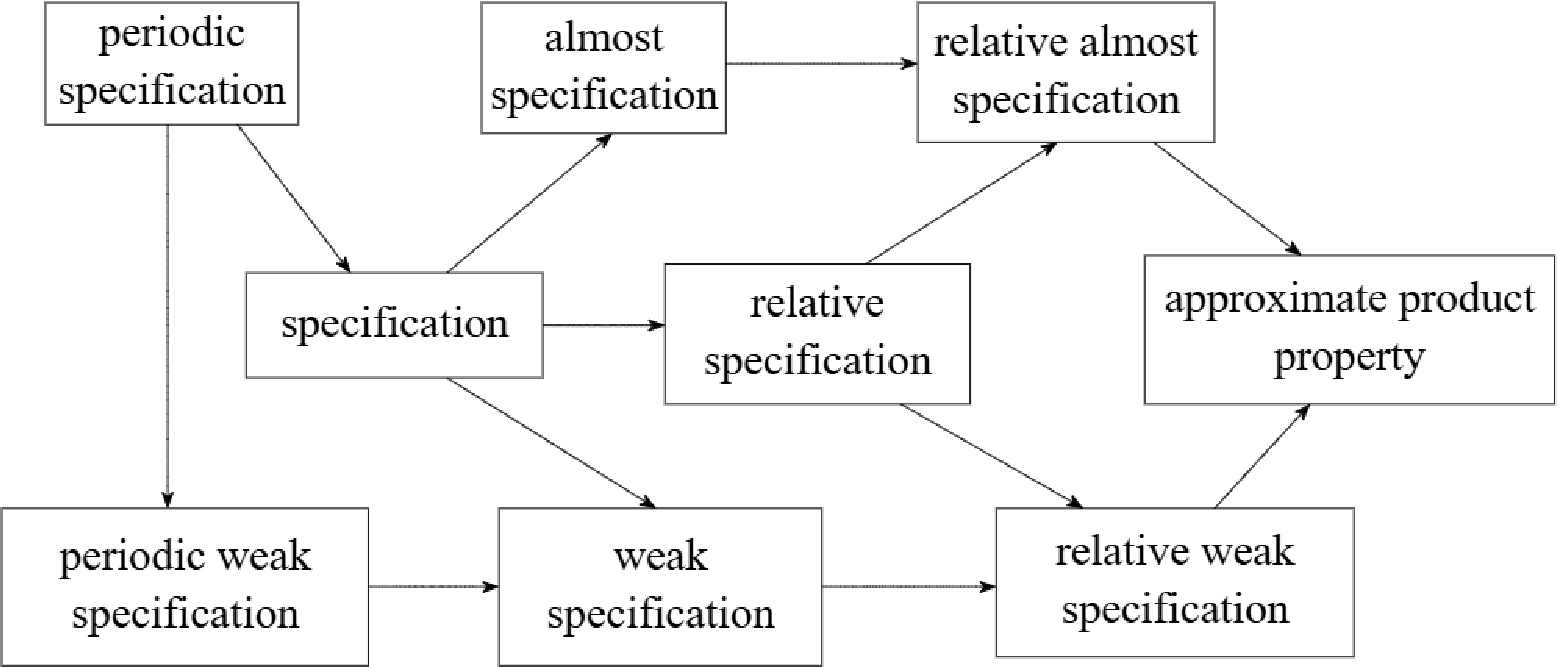}
    \caption*{Figure 1 (reproduced from \cite{KLO})}
\end{figure}

Another addition to this list, the partial specification, was made in 2025 by Yaari \cite{Yaari}, who worked in the context of metrizable abelian groups. The novelty of this concept is that the tracing of segments of orbits is allowed to have a certain percentage of errors. In this way, partial specification is related to specification in a somewhat similar way to how average shadowing is related to classic shadowing. The definition itself has additional strict assumptions about the periods of tracing points, which is a result of it being custom-tailored for work in the setting of groups, which is of central interest to Yaari.

We feel that partial specification is of interest by itself, as it fits in a natural way with other already existing properties. It might actually be prudent to consider if it should be integrated into the context of classical discrete dynamical systems without the assumptions on the periodicity of tracing points, in particular in light of the fact that we did not need them to obtain Proposition \ref{prop:infinite-partial-spec}, which goes along the lines of what \cite[Lemma 17]{KLO2} does for weak specification. For now, however, we have kept partial specification, in Definition \ref{def:partial-specification}, exactly as proposed by Yaari.

In parallel with specification and its variants, another family of properties of dynamical systems came into existence: shadowing and its variants. Introduced in its classic form by Blank in \cite{Blank}, it has since seen a prolific growth. The reader may find \cite{Pilyugin} and \cite{Pilyugin2} useful general references, while \cite{Blank2} illustrates how a modern take on the subject may look. These properties come, in general, in two flavors: strict epsilon-delta variants (e.g. classic shadowing) that demand exact tracing along the whole time interval that is of interest, and average variants (e.g. average shadowing) that are interested in the average error of tracing, which allows for large misalignments, as long as they are sparse enough. Of particular interest is the idea of asymptotic average shadowing, a strong property introduced by Gu in \cite{Gu}, which is nevertheless satisfied by a wide class of maps, including all mixing maps of the unit interval.

Both specification and shadowing tend to cohabit the same classes of maps, and the interplay between them is now somewhat understood, while there still remains a rich playing field full of open problems. For example, \cite[Theorem 3.5]{KKO} proves that a surjective map with almost specification has the asymptotic average shadowing property, and \cite[Theorem 3.8]{KKO} proves, under the assumption of classic shadowing, the equivalence of topological mixing, specification, almost specification, average shadowing, and asymptotic average shadowing. Our paper starts to place the almost specification in the big picture by proving, in Theorem \ref{thm:partial-spec=>asp}, that partial specification implies the average shadowing property. In addition, Proposition \ref{prop:infinite-partial-spec} might be of future interest as a useful technical tool, much like \cite[Lemma 17]{KLO2} works for weak specification.

As an application of Theorem \ref{thm:partial-spec=>asp} we obtain, in Corollary \ref{cor:partial-spec=>measure-density}, the density of ergodic measures in the space of invariant measures for surjective systems with the partial specification property. This result relies on digging into technical details of \cite{BJK} to show that, although not stated explicitly, it contains the proof of Theorem \ref{thm:asp=>measure-density}.

The final subject we touch on is the notion of Besicovitch completeness, introduced in \cite{BFK}. It is known to have close relations with shadowing (e.g., \cite[Theorem 4.4]{CT} proves that, for surjective maps, the asymptotic average shadowing property is equivalent to the average shadowing property plus Besicovitch completeness). It has been known from the start that the full shift is Besicovitch complete \cite[Proposition 2]{BFK}, and this property is, in fact, so ubiquitous that to the best of our knowledge, no example of a system not having it is known to this day. We present an example of a compact dynamical system that is not Besicovitch complete in Example \ref{exm:not-Besicovitch-complete}.




\section{Basics}
\subsection{The Besicovitch Pseudo-Metric}  
First, we define the notions of upper and lower asymptotic densities for the subsets of natural numbers.
\begin{defn}
    Let $A$ be a subset of $\N$. We define the upper asymptotic density of $A$ by 
    \begin{equation}
        d^*(A) = \limsup_{n\rightarrow \infty}\dfrac{\left|A\cap \{1,2,\ldots,n\}\right|}{n}.
    \end{equation}
    Similarly, we denote the lower asymptotic density of $A$ by 
    \begin{equation}
        d_*(A) = \liminf_{n\rightarrow \infty}\dfrac{\left|A\cap \{1,2,\ldots,n\}\right|}{n}.
    \end{equation} 
    If we have $d^*(A)=d_*(A)$ then this limit value is called the density of $A$ and is denoted by $d(A)$.
\end{defn}

In what follows, it is assumed that $(X,\rho )$ is a compact metric space with diameter 1. Let us denote the product space by 
\[X^\infty=\left\{ \underline{x}=\{x_j\}_{j=0}^\infty \mid x_j \in X \text{ for all } j\in \N\right\}.\] An element $\underline{x}\in X^\infty$ is called an $X$-valued sequence. Obviously, $X^\infty$ is a compact metric space. We denote the shift operator on $X^\infty$ by $S$ and recall that it is given by $S(\{x_j\}_{j=0}^\infty)=\{x_{j+1}\}_{j=0}^\infty$.

Let $T\colon X\rightarrow X$ be a continuous map.
Recall that the orbit of a point $x\in X$ is the sequence $\theta(x)=\{ T^n(x) \mid n\in \N \}$. We define
\begin{equation}\label{defn:space-of-orbits}
    X_T = \{\seqx_T \in X^\infty : x \in X\}.
\end{equation}
\begin{defn}\label{defn:Besicovitch-pseudometric}
Let $\underline{x}=\{x_j\}_{j=0}^\infty\in X^\infty$ and $\underline{z}=\{z_j\}_{j=0}^\infty \in X^\infty$.
    The Besicovitch pseudo-metric on $X^\infty$ is defined by 
    \begin{equation}\label{eqn:defn-Besicovitch}
        \rho_B\left( \underline{x}, \underline{z} \right)= \limsup\limits_{n\to\infty} \dfrac{1}{n}\sum_{j=0}^{n-1} \rho(x_j, z_j).
    \end{equation}
\end{defn}
Let $\seqx,\seqz \in X^\infty$. We define the metric $\pi$ on $X^\infty$ by
\[
    \pi( \seqx,\seqz )=\sup\left\{ \min \left(\rho(x_j,z_j),\dfrac{1}{j+1} \right) \mid j\in \N\right\}. 
\]
Observe that the topology on $X^\infty$ which is induced by the metric $\pi$ is compatible with the product topology on $X^\infty$. Now we can consider a dynamical pseudo-metric on $(X^\infty,S)$ as follows.

\begin{defn}\label{def:dynamical-Bp-on-product}
    For $\seqx,\seqz \in X^\infty$ we define the dynamical Besicovitch pseudo-metric on $X^\infty$ by 
    \begin{equation}\label{eqn:def-of-pi_B}
        \pi_B(\seqx,\seqz)=\limsup\limits_{n\to\infty}\dfrac{1}{n}\sum_{j=0}^{n-1}\pi\left(S^j(\seqx),S^j(\seqz)\right).
    \end{equation}
\end{defn}

By \cite[Lemma 5.3]{BJK} we observe that pseudo-metrics $\rho_B$ and $\pi_B$ are uniformly equivalent.

The Besicovitch pseudo-metric on $X^\infty$ induces a pseudo-metric on a topological dynamical system $(X,T)$ in the following way: let $x,y\in X$ and define
\begin{equation}\label{eqn:defn-dynamic-Besicovitch}
    \rho_B^T(x,y)=\rho_B(\{T^j(x)\}_{j=0}^\infty,\{T^j(y)\}_{j=0}^\infty)=\limsup\limits_{n\to\infty}\dfrac{1}{n}\sum_{j=0}^{n-1}\rho(T^j(x),T^j(y)).
\end{equation}
We call $\rho_B^T$ the (dynamical) Besicovitch pseudo-metric on $(X,T)$. Using \eqref{eqn:defn-Besicovitch} and \eqref{eqn:defn-dynamic-Besicovitch} we see that
$\rho_B^T(x,y)=\rho_B(\orbx,\underline{y}_T)$ for $x,y\in X$.




\begin{defn} 
        Let $(X,T)$ be a topological dynamical system.
		A sequence $\{z^{(n)}\}_{n=0}^\infty \subset X$ is called Besicovitch-Cauchy if for every $\eps>0$ there is $N \in \N$ such that for every $n,m \geq N$ we have
		$$\rho_B^T(z^{(n)},z^{(m)}) <\eps.$$
	\end{defn}

	\begin{defn}\label{Besicovitchshadowing}
		A topological dynamical system $(X,T)$ is called  Besicovitch complete if for every Besicovitch-Cauchy sequence $\{z^{(n)}\}_{n=0}^\infty \subset X$ there is a point $z \in X$ such that
		$$\lim_{n \to \infty }\rho_B^T(z^{(n)},z) = 0.$$
	\end{defn}

\subsection{Pseudo-orbits, tracing properties, and specification}

\begin{defn}\label{def:delta-p.o}
    For $\delta>0$, a sequence $\{x_i\}_{i=0}^{\infty}\in X^\infty$ is called a $\delta$-pseudo-orbit if $\rho(T(x_i),x_{i+1})<\delta$ for all $i\geq 0$. 

    For $n\in\N$ a finite sequence $\{x_i\}_{i=0}^{n-1}\in X^n$ satisfying $\rho(T(x_i),x_{i+1})<\delta$ for all $0\leq i\leq n-2$ is called a $\delta$-chain.
\end{defn}
We denote the space of $\delta$-pseudo-orbits in $X^\infty$ by $X_T^{\delta}$, i.e, 
\[
    X_T^{\delta}=\left\{ \seqx=\{x_i\}_{i=0}^\infty \in X^\infty \mid \rho(T(x_i),x_{i+1})<\delta \text{ for all } i\geq0 \right\}\subset X^\infty.
\]

Blank \cite{Blank} analyzed a type of pseudo-orbits referred to as average pseudo-orbits and used them to define the average shadowing property:

\begin{defn}\label{delta-APO}
		We call a sequence $\seqx \in X^\infty$ a $\delta$-average pseudo-orbit for $T$ if there is $N \in \N$ such that for every $n \geq N$ and $k \geq 0$ we have $$ \frac{1}{n}\sum_{i=0}^{n-1}\dist(T(x_{i+k}), x_{i+k+1}) < \delta.$$ 
\end{defn}

\begin{defn}\label{def_asp}
		A topological dynamical system $(X, T)$ has the average shadowing property if for every $\eps >0$ there is $\delta>0$ such that for every $\delta$-average pseudo-orbit $\seqx \in X^\infty$ for $T$ there is $z \in X$ satisfying
		$$\rho_B(\seqz_T, \seqx) < \eps.$$
\end{defn} 

 Later, Gu \cite{Gu} investigated the case where the average error over the sequence of indices from $0$ to $n-1$ tends to zero as $n$ tends to infinity and used it to define the asymptotic average shadowing property:

\begin{defn}\label{AAPO}
		A sequence $\seqx \in X^\infty$ is called an asymptotic average pseudo-orbit for $T$ if $$\limsup_{n \to \infty} \frac{1}{n}\sum_{i=0}^{n-1}\dist(T(x_i), x_{i+1}) = 0.$$ 
\end{defn}

	\begin{defn}\label{def_aasp}
		A topological dynamical system $(X, T)$ has the asymptotic average shadowing property if for every asymptotic average pseudo-orbit $\seqx \in X^\infty$ for $T$ there is $z \in X$ with
		$$\rho_B(\seqz_T, \seqx)= 0.$$
	\end{defn}

Kamae in \cite{Kamae} studied a previously unnamed family of sequences in $X^\infty$, which has been called vague pseudo-orbits in \cite[Definition 3.9]{CT}, and used it to define the vague specification property. Kamae's work \cite{Kamae} was published much earlier than the works of Gu and Blank, but, unfortunately, it went mostly unnoticed:
 
\begin{defn}\label{VPO}
		A sequence $\seqx \in X^\infty$ is said to be a vague pseudo-orbit for $T$ if for every open neighborhood $\SU$ of $X_T$ in $X^\infty$ we have
		$$d(\{ n \in \N : S^n(\seqx)\in \SU\})=1.$$
\end{defn}
    
\begin{defn}\label{def_vsp}
		A topological dynamical system $(X, T)$ has the vague specification property if for every vague pseudo-orbit $\seqx \in X^\infty$ for $T$ there exists $z \in X$ satisfying that for every $\eps>0$ we have
		$$d\left(\{ n \in \N_0 : \rho(T^n(z), x_n)<\eps \}\right)=1.$$
\end{defn}
\begin{thm}\cite[Theorem 4.6]{CT} 
     A topological dynamical system $(X,T)$ has the vague specification property if and only if $(X,T)$ has the asymptotic average shadowing property. 
\end{thm}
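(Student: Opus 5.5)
We prove the two implications separately. Throughout we use the quantitative form of vague pseudo-orbits: for every $\eps>0$ the set $\{n : \rho(T(x_n),x_{n+1})\ge \eps\}$ has density zero. This follows from the neighbourhood formulation. Since $X_T$ is compact, a basic neighbourhood of $X_T$ is $\{\seqx : \pi(\seqx, X_T)<\eta\}$. Being $\pi$-close to $X_T$ controls only finitely many coordinates, and hence, by uniform continuity of $T$, only finitely many consecutive errors $\rho(T(x_j),x_{j+1})$. Conversely, if the one-step errors are small for $m$ consecutive indices, then by uniform continuity the block $x_n,\dots,x_{n+m}$ is close to a true orbit segment. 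So the two descriptions are equivalent.

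\textbf{Asymptotic average shadowing $\Rightarrow$ vague specification.} Let $\seqx$ be a vague pseudo-orbit. By the remark above, for each $\eps$ the set $\{n : \rho(T(x_n),x_{n+1})\ge\eps\}$ has density zero. Splitting the Ces\`aro average of $\rho(T(x_i),x_{i+1})$ into indices where the error is below $\eps$ and the density-zero remainder (where the error is at most $1$, since $\operatorname{diam} X=1$) gives a $\limsup$ of at most $\eps$. Hence $\seqx$ is an asymptotic average pseudo-orbit. Asymptotic average shadowing then gives $z$ with $\rho_B(\seqz_T,\seqx)=0$. By Markov's inequality on averages, for each $\eps$ the set $\{n:\rho(T^n z,x_n)\ge\eps\}$ has upper density at most $\rho_B/\eps=0$. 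This is exactly the vague specification conclusion.

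\textbf{Vague specification $\Rightarrow$ asymptotic average shadowing.} Let $\seqx$ be an asymptotic average pseudo-orbit. It need not be a vague pseudo-orbit, because the errors only vanish on average. But Markov's inequality again shows that $E_\eps=\{n:\rho(T(x_n),x_{n+1})\ge\eps\}$ has density zero for each $\eps$. We then build a new sequence that agrees with $\seqx$ on a set of density one.
\begin{itemize}
\item Choose $\eps_k\downarrow 0$ and increasing thresholds $N_k$ such that $|E_{\eps_k}\cap[0,n)|<n/k$ for $n\ge N_k$.
\item Let $E=\bigcup_k \big(E_{\eps_k}\cap[N_k,N_{k+1})\big)$. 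This is the standard diagonal construction, and $E$ has density zero.
\item Outside $E$, the errors tend to zero along the sequence.
\end{itemize}
This already makes $\seqx$ itself a vague pseudo-orbit: the set where $\rho(T(x_n),x_{n+1})\ge\eps$ is contained in $E$ up to a finite set, hence has density zero. By the equivalence remark, $\seqx$ is therefore a vague pseudo-orbit, and no modification of the sequence is needed.

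Vague specification then yields $z$ with $d(\{n:\rho(T^nz,x_n)<\eps\})=1$ for every $\eps$. Bounding $\frac1n\sum\rho(T^jz,x_j)$ by $\eps$ plus the density of the exceptional set, where each term is at most $1$, gives $\rho_B(\seqz_T,\seqx)\le\eps$ for every $\eps$, hence $\rho_B(\seqz_T,\seqx)=0$.

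The main obstacle is the rigorous translation between the neighbourhood definition of vague pseudo-orbit and the density-zero condition on one-step errors. One direction needs compactness of $X_T$ to reduce to finitely many coordinates, plus uniform continuity of $T$. The other direction needs a union bound over the first $m$ coordinates: the density of $\bigcup_{j<m}(E_\eps - j)$ is still zero, so the shifted sequence $S^n\seqx$ lies within $\pi$-distance $\eta$ of $X_T$ for density-one many $n$.
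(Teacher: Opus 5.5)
The paper gives no proof of this statement; it is quoted from \cite[Theorem 4.6]{CT}. So there is no in-paper argument to compare with, and your proposal has to stand on its own. It does: the proof is correct and self-contained.

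Under the definitions given here, the only real content is your lemma: $\seqx$ is a vague pseudo-orbit iff $d(\{n:\rho(T(x_n),x_{n+1})\ge\eps\})=0$ for every $\eps>0$. You prove both halves correctly. One small point on the forward half: take $\eta\le 1/2$, so that the coordinate $j=1$ is actually constrained by $\pi$. Alternatively, test directly against the open set $\{\seqy:\rho(T(y_0),y_1)<\eps\}$, which contains $X_T$.

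With this lemma, Markov's inequality, and $\operatorname{diam}X=1$, you show two things. Vague pseudo-orbits and asymptotic average pseudo-orbits are exactly the same sequences. The two tracing conclusions are also the same: $\rho_B(\seqz_T,\seqx)=0$ versus density-one $\eps$-closeness for every $\eps$. The equivalence then follows at once in both directions, and no surjectivity is needed.

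One thing to clean up is the second direction. You first assert that an asymptotic average pseudo-orbit ``need not be a vague pseudo-orbit'' and announce a modification of the sequence. You then conclude that no modification is needed. The initial assertion is false, and your own argument refutes it. The diagonal set $E$ is also superfluous: Markov's inequality directly gives $d(E_\eps)=0$ for each fixed $\eps$, which is exactly the hypothesis of your lemma. If you delete that remark and the bulleted construction, the proof is complete and tidy.
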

\begin{thm}\cite[Theorem 4.4]{CT}
    Let $(X,T)$ be a surjective topological dynamical system. Then $(X,T)$ has the asymptotic average shadowing property if and only if $(X,T)$ has the average shadowing property and $(X,T)$ is Besicovitch-complete.
\end{thm}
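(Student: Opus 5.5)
The plan is to prove the two implications separately. The direction "average shadowing plus Besicovitch completeness $\Rightarrow$ asymptotic average shadowing" goes by approximating an asymptotic average pseudo-orbit by genuine $\delta$-average pseudo-orbits. The converse combines a gluing construction for Besicovitch completeness with a literature result for average shadowing, and that literature result is where surjectivity enters.

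\emph{($\Leftarrow$)} Let $\seqx$ be an asymptotic average pseudo-orbit. Fix $\eta>0$ and a block length $L$, and split $\N$ into consecutive blocks $[mL,(m+1)L)$.
\begin{itemize}
\item Call a block \emph{bad} if the average of $\rho(T(x_i),x_{i+1})$ over it is at least $\eta$.
\item Since $\frac1n\sum_{i<n}\rho(T(x_i),x_{i+1})\to 0$, the bad blocks cover a set of upper density $0$.
\item Replace $\seqx$ on every bad block by an arbitrary true orbit segment of length $L$. Call the result $\seqx^{(\eta,L)}$.
\end{itemize}
Then $\rho_B(\seqx^{(\eta,L)},\seqx)=0$. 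In the new sequence every block has internal average error below $\eta$, and each block boundary contributes at most $1$, since $\operatorname{diam}X=1$. Hence for every $k$ and every $n\ge N(L)$ the window averages in Definition~\ref{delta-APO} are bounded by roughly $\eta+2/L$.

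Choosing $\eta,L$ suitably therefore makes $\seqx^{(j)}$ a $\delta_j$-average pseudo-orbit. Here $\delta_j$ is the constant given by average shadowing for $\eps_j=2^{-j}$. This yields points $z_j$ with $\rho_B((\seqz_j)_T,\seqx)\le\rho_B((\seqz_j)_T,\seqx^{(j)})+\rho_B(\seqx^{(j)},\seqx)<2^{-j}$. By the triangle inequality $\{z_j\}$ is Besicovitch-Cauchy. Besicovitch completeness gives $z$ with $\rho_B^T(z_j,z)\to0$, and hence $\rho_B(\seqz_T,\seqx)=0$.

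\emph{($\Rightarrow$), Besicovitch completeness.} Let $\{z^{(n)}\}$ be Besicovitch-Cauchy and pass to a subsequence $y_k=z^{(n_k)}$ with $\rho_B^T(y_k,y_{k+1})<2^{-k}$.
\begin{itemize}
\item Choose $N_1<N_2<\dots$ growing fast, say $N_{k+1}\ge 2^kN_k$, such that $\frac1n\sum_{j<n}\rho(T^jy_k,T^jy_{k+1})<2^{-k+1}$ for all $n\ge N_{k+1}$.
\item Define $\seqx$ by $x_j=T^j(y_k)$ for $N_k\le j<N_{k+1}$.
\end{itemize}
Jumps occur only at the times $N_k$, which form a set of zero density, so $\seqx$ is an asymptotic average pseudo-orbit. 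Asymptotic average shadowing gives $z$ with $\rho_B(\seqz_T,\seqx)=0$.

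Next compare $\seqx$ with the orbit of $y_k$. On the block $[N_m,N_{m+1})$ with $m\ge k$, the distance from $T^j(y_k)$ to $x_j$ is at most $\sum_{i=k}^{m-1}\rho(T^jy_i,T^jy_{i+1})$. Using the chosen tail bounds together with the fast growth of $N_k$, this yields $\rho_B(\seqx,(\underline{y_k})_T)\le C2^{-k}$. Hence $\rho_B^T(y_k,z)\to0$, and the Cauchy property transfers the convergence to the whole sequence.

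\emph{($\Rightarrow$), average shadowing.} Here I would invoke the known result of Kulczycki, Kwietniak and Oprocha that, for surjective systems, asymptotic average shadowing implies average shadowing; surjectivity is what that argument needs. If it had to be reproved, the idea is to cut a $\delta$-average pseudo-orbit into long blocks. Each block is concatenated with a copy of itself whose errors are averaged away, producing an asymptotic average pseudo-orbit that agrees with the original on a set of large density. Surjectivity is used to extend backward orbits so that the blocks can be joined.

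The main obstacle is the bookkeeping in the gluing step of ($\Rightarrow$): the tail estimate must hold uniformly for all $n$ inside later blocks, not only at the block endpoints.
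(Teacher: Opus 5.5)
The paper gives no argument for this statement; it quotes it from \cite{CT}. So there is nothing internal to compare against, and your proposal has to stand on its own. It does, provided one accepts the cited implication ``AASP $\Rightarrow$ ASP for surjective systems'' from \cite{KKO}. Your two self-contained parts are correct, and neither uses surjectivity, which is worth noting. In ($\Leftarrow$), every repaired block has total step error at most $L\eta+1$, and a window of length $n$ meets at most $n/L+2$ blocks. Hence all window averages are at most $\eta+1/L+(2L\eta+2)/n$, uniformly in the starting index. The repaired sequence differs from $\seqx$ only on a set of upper density zero, and the rest is the triangle inequality plus completeness. In the completeness half of ($\Rightarrow$), the condition $N_{k+1}\ge 2^kN_k$ makes the jump times a zero-density set, so the glued sequence is an asymptotic average pseudo-orbit.

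The obstacle you flag at the end is already handled by your choice of $N_{k+1}$, since that bound holds for \emph{all} $n\ge N_{k+1}$. Take $N_m\le n<N_{m+1}$ with $m\ge k$. Bounding the terms with $j<N_k$ by $1$ and exchanging the order of summation gives
\[
\sum_{j<n}\rho(T^jy_k,x_j)\le N_k+\sum_{i=k}^{m-1}\sum_{j<n}\rho(T^jy_i,T^jy_{i+1})<N_k+n\,2^{-k+2},
\]
because $n\ge N_{i+1}$ for each $i\le m-1$. Hence $\limsup_n\frac1n\sum_{j<n}\rho(T^jy_k,x_j)\le 2^{-k+2}$ directly, with no further bookkeeping.

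Your fallback sketch for AASP $\Rightarrow$ ASP, however, would not work as described, so drop it and rely on the citation. Suppose a sequence agrees with a $\delta$-average pseudo-orbit on a set of density at least $1-\gamma$. Then it has the same step errors on a set of density at least $1-2\gamma$. If the original's errors are spread out (say about $\delta/2$ at every step), no such sequence is an asymptotic average pseudo-orbit.

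What replaces agreement is Besicovitch closeness, obtained by contradiction. First, if a $\delta$-average pseudo-orbit is not $\eps$-shadowed in average, it contains arbitrarily long segments that no orbit segment traces within $\eps/2$ in average. Otherwise you could glue tracing orbit segments of lengths $L_m\to\infty$ with $L_m/\sum_{i<m}L_i\to 0$ and apply AASP. Second, take such untraceable segments from $1/n$-average pseudo-orbits and concatenate them, with suitable padding and each segment longer than everything placed before it. This produces an asymptotic average pseudo-orbit whose AASP-shadow would trace the $n$-th segment for large $n$, a contradiction.
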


The notions of partial shadowing and partial specification (Definition \ref{def:partial-specification}) were introduced by Yaari in \cite{Yaari}:

\begin{defn}\label{def:delta-partial-p.o}
    Let $\delta>0$. A sequence $\{x_i\}_{i=0}^r\subset X$ for $r \geq 1$ is called a $\delta$-partial pseudo-orbit for $T$ if 
    \[
        \dfrac{1}{r}\left | \left\{0\leq i \leq r-1 \mid \rho(T(x_i),(x_{i+1}))<\delta\right\} \right |>1-\delta.
    \]
\end{defn}

\begin{defn}\label{def:partial-tracing-of-partial-p.o}
    Let $\eps>0$. A sequence $\{x_i\}_{i=0}^r$ is $\eps$-partially traced by a point $z\in X$ if 
    \[
        \dfrac{\left |\Lambda \right |}{r+1}>1-\eps \quad \text{where} \quad \Lambda=\{ 0\leq i \leq r \mid \rho(T^i(z),x_i)<\eps\}.
    \]
\end{defn}
\begin{defn}\label{def:partial-shadowing}
    A topological dynamical system $(X,T)$ has the partial shadowing property if for every $\eps>0$ there is $\delta>0$ such that every $\delta$-partial pseudo-orbit can be $\eps$-partially traced by some point in $X$.
\end{defn}

Another class of actively studied properties of dynamical systems are various variants of specification.

\begin{defn}
    Given $a,b\in \N$ with $a\leq b$ we define two types of orbit segment of $x\in X$:
    \begin{equation*}\begin{array}{l}
        T^{[a,b]}(x)=T^a(x),T^{a+1}(x),\ldots, T^{b}(x),\\
        T^{[a,b)}(x)=T^a(x),T^{a+1}(x),\ldots, T^{b-1}(x).
    \end{array}\end{equation*}
\end{defn}

\begin{defn}\label{def:M-spaced-specification}
    Let $f\colon \N \to \N$ be a function, let $r \geq 1$, and let $x_1,\ldots,x_r\in X$.
    
    A collection of orbit segments $\{T^{[a_i,b_i]}(x_i)\}_{i=1}^r$ is called an $f$-spaced specification if $a_1<b_1<a_2<b_2<\ldots<a_r<b_r$ and $a_{i}-b_{i-1}\geq f(b_i-a_i+1)$ for $i=2,\ldots,r$.

    Similarly, a collection of orbit segments $\{T^{[a_i,b_i)}(x_i)\}_{i=1}^r$ is called an $f$-spaced specification if $a_1<b_1<a_2<b_2<\ldots<a_r<b_r$ and $a_{i}-b_{i-1}\geq f(b_i-a_i)$ for $i=2,\ldots,r$.
    
      In particular, if $M\in \N$ then by an $M$-spaced specification (of either kind) we understand the $f$-spaced specification, where $f$ is constantly equal to $M$.
\end{defn}

\begin{defn}
    A specification $\{T^{[a_i,b_i]}(x_i)\}_{i=1}^{r}$ is $\eps$-traced by a point $y \in X$ if 
    \[
        \rho(T^j(x_i),T^j(y))<\eps \quad \text{for $1\leq i \leq r$ and $a_i\leq j \leq b_i$}.
    \]
\end{defn}
\begin{defn}\label{def:specification-property}
    A topological dynamical system $(X,T)$ has the specification property if for every $\eps>0$ there exists $M\in \N$ such that every $M$-spaced specification $\{T^{[a_i,b_i]}(x_i)\}_{i=1}^r$ can be $\eps$-traced by some point $y\in X$.
\end{defn}
\begin{defn}\label{def:weak-specification-property}
    A topological dynamical system $(X,T)$ has the weak specification property if for every $\eps>0$ there exists a non-decreasing function $f:\N \to \N$ with $\lim_{n\to\infty}f(n)/n \to 0$ such that every $f$-spaced specification $\{T^{[a_i,b_i]}(x_i)\}_{i=1}^r$ can be $\eps$-traced by some point $y\in X$.
\end{defn}
It follows from the definitions that the specification property implies the weak specification property. 
\begin{defn}\label{def:partial-tracing-of-specification}
    Let $\eps>0$ and let $M\geq 1$. An $M$-spaced specification $\{T^{[a_i,b_i)}(x_i)\}_{i=1}^r$ is $\eps$-partially traced by a point $z\in X$ if for every $1\leq i\leq r$ we have
    \[ 
        \dfrac{\left| \Lambda_i \right|}{b_i-a_i}>1-\eps \quad \text{where} \quad \Lambda_i=\{ a_i\leq n <b_i \mid \rho(T^n(z),T^n(x_i))<\eps \}.
    \]    
\end{defn}
\begin{defn}\label{def:partial-specification-of-periods-P}
    Let $P\subset \N$. A topological dynamical system $(X,T)$ has the partial specification property with periods $P$ if for every $\eps>0$ there exist $M,N\in \N$ such that for every $M$-spaced specification $\{T^{[a_i,b_i)}(x_i)\}_{i=1}^r$ and every $n\in P$  with $n\geq \max\{N,(1+\eps)b_r\}$ there exists a point $z\in X$ of period $n$ which $\eps$-partially traces the specification $\{T^{[a_i,b_i)}(x_i)\}_{i=1}^r$.
\end{defn}
\begin{defn}\label{def:partial-specification}
    A topological dynamical system $(X,T)$ has the partial specification property if for every $c\in \N$ there is $P\subset c\N$ such that $(X,T)$ has the partial specification property with periods $P$.
\end{defn}

\section{Main Results}

Let us first recall some widely known dynamical concepts.
\begin{defn}
    We say that a topological dynamical system $(X,T)$ is chain transitive if for every $\delta>0$ and for every $x,y\in X$ there are $n\geq 1$ and a $\delta$-chain $(x_i)_{i=0}^{n-1}$ with $x_0=x$ and $x_{n-1}=y$.

    We say that $(X,T)$ is chain mixing if for every $\delta>0$ there exists $M\in \N$ such that for every $x,y\in X$ and for all $n\geq M$ there is a $\delta$-chain $(x_i)_{i=0}^{n-1}$ with $x_0=x$ and $x_{n-1}=y$.

    We say that $(X,T)$ is topologically mixing if for every pair of nonempty open sets $U,V\subset X$ there exists $N\in\N$ such that for all $n\geq N$ we have $T^n(U)\cap V\neq\emptyset$.
\end{defn}
\begin{prop}\label{prop:partial-p.o-avg-p.o}
    Let $(X,T)$ be a topological dynamical system and let $\delta>0$. Then for every $\delta^2$-average pseudo-orbit $(x_i)_{i=0}^\infty$ of $(X,T)$ there exists $N\in \N$ such that for all $n\geq N$ and $k\geq 0$ the sequence $(x_{i+k})_{i=0}^{n}$ is a $\delta$-partial pseudo-orbit of $(X,T)$. 
\end{prop}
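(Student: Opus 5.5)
This is a Markov/Chebyshev counting argument applied to the average error bound. Take $N$ from Definition \ref{delta-APO} for the $\delta^2$-average pseudo-orbit $(x_i)_{i=0}^\infty$. Then for every $n\geq N$ and every $k\geq 0$ we have
\[
\frac{1}{n}\sum_{i=0}^{n-1}\rho(T(x_{i+k}),x_{i+k+1})<\delta^2 .
\]
I claim this same $N$ works.

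Fix $n\geq N$ and $k\geq 0$. Consider the finite sequence $(y_i)_{i=0}^{n}$ with $y_i=x_{i+k}$; in the notation of Definition \ref{def:delta-partial-p.o} this means $r=n$. Let
\[
B=\{0\leq i\leq n-1 \mid \rho(T(y_i),y_{i+1})\geq\delta\}
\]
be the set of bad indices. Each term with $i\in B$ contributes at least $\delta$ to the sum, and all terms are nonnegative. Hence
\[
\delta|B|\leq\sum_{i=0}^{n-1}\rho(T(y_i),y_{i+1})<n\delta^2,
\]
which gives $|B|<n\delta$.

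The complementary set of good indices $G=\{0,\dots,n-1\}\setminus B$ therefore satisfies
\[
|G|/n=1-|B|/n>1-\delta.
\]
This is exactly the defining inequality of a $\delta$-partial pseudo-orbit with $r=n\geq 1$. We may take $N\geq1$ without loss of generality.

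There is no real obstacle here. The only points needing care are that the indexing matches: a sequence of length $n+1$ has $n$ transitions, which agrees with the $1/r$ normalization. The inequality must also stay strict, and it does, because the average-pseudo-orbit bound is strict. If $\delta\geq 1$, the conclusion holds trivially anyway, since $1-\delta\le 0<|G|/n$ whenever $G$ is nonempty. Even this case is already covered by the computation above, so no case split is needed.
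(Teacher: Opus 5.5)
Your proof is correct and is essentially the paper's argument. The paper also takes $N$ from the definition of a $\delta^2$-average pseudo-orbit and uses the same Markov-type count of bad indices. The only difference is that the paper phrases it as a contradiction: $\lceil n\delta\rceil$ or more bad indices would force the average to be at least $\delta^2$.
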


\begin{proof}
    Let $(x_i)_{i=0}^\infty\subset X$ be a $\delta^2$-average pseudo-orbit. Then, there is $N\in \N$ such that for all $n\geq N$, $k\geq0$ we have
    \[
        \dfrac{1}{n}\sum_{i=0}^{n-1}\rho\left( T(x_{i+k}),x_{i+k+1} \right)<\delta^2.
    \]
    Fix $k\geq 0$, and $n\geq N$. Let us write 
    \[
        J_\delta(n,k) = \left\{ 0\leq i < n \mid \rho\left(T(x_{i+k}),x_{i+k+1}\right)<\delta \right\}.
    \] 
    It is enough to show that $\left|J_\delta(n,k)\right|>(1-\delta)n$. Suppose to the contrary that $\left|J_\delta(n,k)\right|\leq (1-\delta)n=n-\delta n$. That means, for at least $\lceil n\delta\rceil$ elements $i$ of $\{0,1,\ldots,n-1\}$ we have $\rho(T(x_{i+k}),x_{i+k+1})\geq \delta$. It follows that
    \[
        \dfrac{1}{n}\sum_{i=0}^{n-1}\rho\left( T(x_{i+k}),x_{i+k+1} \right)\geq \delta^2,
    \] which is a contradiction. Hence, $\left| J_\delta(n,k)\right|>(1-\delta)n$.
\end{proof}

\begin{prop}\label{prop:prod-space-partial-shadow}
    If $(X,T)$ has the partial shadowing property, then $(X\times X, T\times T)$ has the partial shadowing property.
\end{prop}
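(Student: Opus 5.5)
The plan is to equip $X\times X$ with the maximum metric $\rho_2((x,y),(x',y'))=\max\{\rho(x,x'),\rho(y,y')\}$. It is compatible with the product topology and keeps the diameter equal to $1$. I will then transfer partial shadowing coordinatewise and pay for the two coordinates with a union bound on the sets of bad indices. If the paper intends some other compatible metric on $X\times X$, I would first reduce to $\rho_2$ by uniform equivalence on the compact space, shrinking $\eps$ and $\delta$ accordingly. This reduction is routine.

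Fix $\eps>0$ and apply the partial shadowing property of $(X,T)$ (Definition \ref{def:partial-shadowing}) with $\eps/2$. This gives $\delta>0$ such that every $\delta$-partial pseudo-orbit in $X$ is $\eps/2$-partially traced by some point of $X$. I claim the same $\delta$ works for $(X\times X,T\times T)$.

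Let $\{(x_i,y_i)\}_{i=0}^r$ be a $\delta$-partial pseudo-orbit for $T\times T$. For every index $i$ with $\rho_2((T(x_i),T(y_i)),(x_{i+1},y_{i+1}))<\delta$, both $\rho(T(x_i),x_{i+1})<\delta$ and $\rho(T(y_i),y_{i+1})<\delta$ hold. So the good index set of each coordinate sequence contains the good index set of the product sequence, and both $\{x_i\}_{i=0}^r$ and $\{y_i\}_{i=0}^r$ are $\delta$-partial pseudo-orbits in the sense of Definition \ref{def:delta-partial-p.o}.

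Hence there are $z,w\in X$ that $\eps/2$-partially trace $\{x_i\}$ and $\{y_i\}$ respectively. Denote their tracing sets by $\Lambda_1,\Lambda_2\subset\{0,\ldots,r\}$, so that $|\Lambda_j|>(1-\eps/2)(r+1)$.

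Now take the point $(z,w)$. For $i\in\Lambda_1\cap\Lambda_2$ we get
\[
\rho_2\bigl((T\times T)^i(z,w),(x_i,y_i)\bigr)<\eps/2<\eps .
\]
The complement of $\Lambda_1\cap\Lambda_2$ has fewer than $\tfrac{\eps}{2}(r+1)+\tfrac{\eps}{2}(r+1)=\eps(r+1)$ elements. Therefore $|\Lambda_1\cap\Lambda_2|>(1-\eps)(r+1)$, and $(z,w)$ $\eps$-partially traces the given sequence in the sense of Definition \ref{def:partial-tracing-of-partial-p.o}.

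There is no real obstacle here. The only points needing care are the choice of metric on the product and the bookkeeping with strict inequalities in the union bound, which is why partial shadowing is applied with $\eps/2$ rather than $\eps$.
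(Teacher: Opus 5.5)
Your proof is correct and follows the paper's argument. Both use the max metric on $X\times X$, note that each coordinate sequence of a $\delta$-partial pseudo-orbit is itself a $\delta$-partial pseudo-orbit, trace the two coordinates separately, and combine the tracing points with a union bound on the bad index sets; the only cosmetic difference is that the paper traces at scale $\eps/3$, while your $\eps/2$ already suffices thanks to the strict inequalities.
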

\begin{proof}
    First, we note that $\rho_\max((x_1,y_1),(x_2,y_2))=\max\left\{\rho(x_1,x_2),\rho(y_1,y_2)\right\}$ is a metric on $X\times X$ compatible with the product topology. Fix $\eps>0$ and let $\delta>0$ be such that every $\delta$-partial pseudo-orbit can be $\eps/3$-traced in $X$. Let $(x_n,y_n)_{n=1}^N\in X\times X$ be a $\delta$-partial pseudo-orbit of $T\times T$, which means that
    \begin{equation}\label{eqn:partial-p.o-in-prod-space}
        \dfrac{1}{N-1}\left|\left\{ 1\leq n \leq N-1 \mid \rho_\max\left( (T\times T)((x_n,y_n)),(x_{n+1},y_{n+1})\right)<\delta \right\} \right|>1-\delta.
    \end{equation}
    Using \eqref{eqn:partial-p.o-in-prod-space} and the definition of $\rho_\max$ we get 
    \begin{align*}
         &\dfrac{1}{N-1}\left|\left\{ 1\leq n \leq N-1 \mid \rho\left( T(x_n),x_{n+1}\right)<\delta \right\} \right|>1-\delta,\\
         &\dfrac{1}{N-1}\left|\left\{ 1\leq n \leq N-1 \mid \rho\left( T(y_n),y_{n+1}\right)<\delta \right\} \right|>1-\delta.
    \end{align*}
 Therefore, $(x_n)_{n=1}^N$ and $(y_n)_{n=1}^N$ are $\delta$-partial pseudo-orbits of $T$.
    Hence, there exists $x',y'\in X$ such that 
     \begin{align*}
         &\dfrac{1}{N}\left|\left\{ 1\leq n \leq N \mid \rho\left( T^{n-1}(x'),x_{n}\right)<\eps/3 \right\} \right|>1-\eps/3,\\
         &\dfrac{1}{N}\left|\left\{ 1\leq n \leq N \mid \rho\left( T^{n-1}(y'),y_{n}\right)<\eps/3 \right\} \right|>1-\eps/3.
    \end{align*}
    Thus,
    \[
        \dfrac{1}{N}\left|\left\{ 1\leq n \leq N \mid \rho_\max\left( (T\times T)^{n-1}((x',y')),(x_n,y_{n})\right)<\eps \right\} \right|>1-\eps.\qedhere
    \]
\end{proof}
\begin{prop}\label{prop:partial-shadow=>chain-mixing}
    If $(X,T)$ is a surjective topological dynamical system with the partial shadowing property, then $(X,T)$ is chain mixing.
\end{prop}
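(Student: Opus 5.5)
The plan is to argue in three steps. First, partial shadowing together with surjectivity gives chain transitivity. Second, Proposition \ref{prop:prod-space-partial-shadow} transfers the same argument to $(X\times X,T\times T)$, which is again surjective, so the product is chain transitive. Third, we invoke the standard fact that chain transitivity of $T\times T$ implies chain mixing of $T$; equivalently, a chain transitive system is chain mixing unless its chain components cyclically permute finitely many clopen pieces, and such a permutation is ruled out by transitivity of the product.

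\emph{Step 1 (chain transitivity).} Fix $\delta>0$ and $x,y\in X$. Choose $\eps<\delta/3$ and small enough that uniform continuity of $T$ gives $\rho(T(a),T(b))<\delta/3$ whenever $\rho(a,b)<\eps$. Let $\delta'\le \eps$ be given by partial shadowing for $\eps$. By surjectivity pick $y_{-L}$ with $T^{L}(y_{-L})=y$. Consider the sequence
\[
x,T(x),\ldots,T^{L}(x),\;y_{-L},T(y_{-L}),\ldots,T^{L}(y_{-L})=y,\ldots,T^{2L}(y_{-L})
\]
of length about $3L$. It has a single jump, so for $L$ large it is a $\delta'$-partial pseudo-orbit. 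A point $z$ $\eps$-partially tracing it is $\eps$-close to the sequence at a proportion $>1-\eps$ of times. Hence, for $L$ large, there are tracing times $i_0$ in the first block and $i_1$ in the second block, with $i_1$ in the stretch just past the index of $y$ and the gap $i_1-i_0$ as large as needed. We then build a $\delta$-chain from $x$ to $y$: follow the true orbit of $x$ up to $T^{i_0}(x)$, jump to $T^{i_0+1}(z)$ (which is within $\delta$ of $T^{i_0+1}(x)$ by the choice of $\eps$), follow $z$ up to some time where $T^{j}(z)$ is $\eps$-close to $T^{j'}(y_{-L})$, jump onto the orbit of $y_{-L}$, and follow it to $y$. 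The bookkeeping of index alignment needs care: we need a tracing time inside the $y_{-L}$-block strictly before the index of $y$. Putting $y$ in the middle of the second block, rather than at its end, is exactly what guarantees this, since tracing fails on fewer than $\eps\cdot 3L$ indices, which is fewer than $L$.

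\emph{Step 2 (product).} The system $T\times T$ is surjective, and by Proposition \ref{prop:prod-space-partial-shadow} it has partial shadowing, so Step 1 shows that $T\times T$ is chain transitive.

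\emph{Step 3 (from product chain transitivity to chain mixing).} We use the standard lemma: if $T\times T$ is chain transitive then $T$ is chain mixing. Sketch: fix $\delta$ and $x\in X$. Since $T\times T$ is chain transitive, there is a $\delta$-chain from $(x,x)$ to $(x,T(x))$ in the product. Its first coordinate gives a $\delta$-chain from $x$ to $x$, and its second coordinate gives one of the same length from $x$ to $T(x)$. Hence there are $\delta$-loops at $x$ of lengths $n$ and $n+1$, whose gcd is $1$. Concatenation then yields $\delta$-loops at $x$ of all sufficiently large lengths. Combining these with $\delta$-chains from $x$ to any $y$ and from any $u$ to $x$ (Step 1), and using compactness to make the threshold $M$ uniform over $u,y$ via a finite $\delta$-net, we obtain chain mixing.

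The main obstacle is Step 1, specifically the index alignment so that the partially traced point can be spliced in without accumulating more than one jump of size $<\delta$ at each transition. The $\eps$-versus-$\delta$ continuity margin handles this. The uniformity in Step 3 is routine.
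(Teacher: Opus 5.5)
Your route is the paper's: chain transitivity of any surjective system with partial shadowing via a partial pseudo-orbit with a single jump, transfer to $T\times T$ through Proposition~\ref{prop:prod-space-partial-shadow}, and then the fact that chain transitivity of $T\times T$ forces chain mixing of $T$. The paper cites that last fact from \cite{RW} rather than proving it. Splicing onto the orbit of $z$ at time $i_0+1$, paid for by uniform continuity, is a tidy way to avoid the paper's four-case split. Your $\delta/3$ margins also cover the degenerate case where the two tracing times are adjacent.

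Two points need correcting.

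First, the loop argument in Step~3 fails as written. A $\delta$-chain from $(x,x)$ to $(x,T(x))$ gives a loop at $x$ with $n$ steps and a chain from $x$ to $T(x)$ with $n$ steps. Nothing turns the latter into a loop at $x$ with $n+1$ steps, because you have no way back from $T(x)$ to $x$. Reverse the endpoints instead. If $(u_k,w_k)_{k=0}^{n}$ is a $\delta$-chain from $(x,T(x))$ to $(x,x)$, then $(u_k)$ is a loop at $x$ with $n$ steps. Also $x,w_0,w_1,\ldots,w_n$ is a loop at $x$ with $n+1$ steps, since $w_0=T(x)$ makes the first step exact. The rest of your Step~3 then goes through.

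Second, the index bookkeeping in Step~1 is muddled, though the final argument survives. A tracing time ``just past the index of $y$'' cannot be used to reach $y$, which contradicts what you say later. You also do not need a tracing time strictly before $y$: one at the index of $y$ itself lets you end the chain with $\ldots,T^{j-1}(z),y$. So the tail beyond $y$ buys nothing and only enlarges the number of permitted tracing failures. If you put $y$ at the end of the second block, as the paper does, there are fewer than $\eps(2L+2)<L+1$ failures. That already guarantees a tracing index in each block of $L+1$ indices.
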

\begin{proof}
    It is known that if $T\times T$ is chain transitive, then $T$ is chain mixing (see the remark after Corollary 12 in \cite{RW}). By this fact and by Proposition~\ref{prop:prod-space-partial-shadow} it is enough to show that $(X,T)$ is chain transitive.
    
      Fix $1>\delta>0$ and $x,y\in X$. Using uniform continuity of $T$ we find $\gamma>0$ such that if $z,z'\in X$ and $\rho(z,z')<\gamma$, then $\rho(T(z),T(z'))<\delta$. Without loss of generality, we may assume that $\delta>\gamma$ and $\dfrac{1}{4}>\gamma$. Next, we use partial shadowing to find $\gamma>\beta>0$ such that every $\beta$-partial pseudo-orbit is $\gamma$-partially traced by some point in $X$.
    
      Take $l>0$ such that 
     $\beta>\dfrac{1}{2l-1}$ and, using the surjectivity of $T$, find $y_0$ in $X$ such that $T^{l-1}(y_0)=y$. Observe that $(x_i)_{i=0}^{2l-1}$ defined as
    \[
        x_0=x, x_1=T(x),\ldots,x_{l-1}=T^{l-1}(x),x_l=y_0, x_{l+1}=T(y_0),\ldots,x_{2l-1}=y
    \]
    is a $\beta$-partial pseudo-orbit. Hence, there is $z\in X$ such that 
    \begin{equation}\label{eqn:tracing-gammafraction-of-orbit}
        \dfrac{1}{2l}\left| \left\{ 0\leq i \leq 2l-1 \mid \rho(x_i,T^{i}(z))<\gamma \right\}\right|>1-\gamma.
    \end{equation}
  
    Using $\gamma<\dfrac{1}{4}$ and \eqref{eqn:tracing-gammafraction-of-orbit} we pick $i$ such that $0\leq i \leq l-1$ and $\rho(T^i(z),T^i(x))<\gamma$. Similarly, we pick $j$ such that $l\leq j \leq 2l-1$ and $\rho(T^j(z),T^{j-l}(y_0))<\gamma$. We will now investigate four possible cases, listing for each a $\delta$-pseudo-orbit connecting $x$ to $y$: 
    \begin{itemize}
        \item Case $1$: we have $1\leq i \leq l-1$ and $l\leq j \leq l-2$. Consider
    \[
        x,T(x),\ldots,T^{i-1}(x),T^i(z),\ldots,T^{j-1}(z),T^{j-l}(y_0),\ldots ,T^{2l-2}(y_0),y.
    \]

        \item Case $2$: we have $i=0$ and $l\leq j \leq 2l-1$. Consider
    \[
        x,T(z),\ldots,T^{j-1}(z),T^{j-l}(y_0),\ldots,T^{2l-2}(y_0),y.
    \]
        
        \item Case $3$: we have $1\leq i \leq l-1$ and $j=2l-1$. Consider
    \[
        x,T(x),\ldots,T^{i-1}(x),T^i(z),\ldots,T^{2l-2}(z),y.
    \] 
        
        \item Case $4$: we have $i=0$ and $j=2l-1$. Consider
    \[
     x,T(z),T^2(z),\ldots,T^{2l-2}(z),y.
    \]
        
    \end{itemize}

\end{proof}
The next result is a direct consequence of Proposition \ref{prop:partial-shadow=>chain-mixing} and \cite[Lemma 4.7]{Yaari}. 
\begin{cor}
    If $(X,T)$ is a surjective topological dynamical system with the partial specification property, then $(X,T)$ is chain mixing.
\end{cor}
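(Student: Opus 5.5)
The statement is a corollary, so the plan is a two-step chain of implications using results already available. First, invoke \cite[Lemma 4.7]{Yaari}, which (as indicated in the sentence preceding the corollary) shows that the partial specification property implies the partial shadowing property. Second, since $(X,T)$ is assumed surjective, Proposition~\ref{prop:partial-shadow=>chain-mixing} applies to the system now known to have partial shadowing, and yields chain mixing directly.

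The only point requiring care is checking that Yaari's lemma applies in our setting. Yaari works with actions related to metrizable abelian groups, so I would verify that the proof of \cite[Lemma 4.7]{Yaari} uses only the definition of partial specification with periods (Definitions~\ref{def:partial-specification-of-periods-P} and~\ref{def:partial-specification}) together with compactness. I expect this to hold.

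If the lemma turned out not to transfer verbatim, the fallback would be a direct proof that partial specification gives partial shadowing. Given a $\delta$-partial pseudo-orbit, break it at the steps where $\rho(T(x_i),x_{i+1})\ge\delta$. Use uniform continuity to make each resulting maximal block an $\eps$-true orbit segment of its initial point for a bounded length, and discard short blocks and gaps of length $M$ between blocks as part of the allowed $\eps$-fraction of errors. Then apply partial specification to the resulting $M$-spaced specification to obtain a single tracing point.

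This fallback is the main obstacle, because the error bookkeeping must keep the total untraced proportion below $\eps$. In particular, blocks of length $L$ only track a genuine orbit for the length controlled by uniform continuity, so they must be chopped into pieces of bounded length, with gaps of size $M$ inserted between pieces. This requires choosing $\delta$ small relative to $1/M$ and to the chopping length. For the corollary itself, however, the citation route is the intended one, and the proof is just the composition of the two results.
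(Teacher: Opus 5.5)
Your proposal is correct and matches the paper's argument: the paper obtains the corollary directly by composing \cite[Lemma 4.7]{Yaari} (partial specification implies partial shadowing) with Proposition~\ref{prop:partial-shadow=>chain-mixing}. The fallback direct proof you sketch is not needed here.
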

It is known that chain mixing is equivalent to topological mixing for systems with the shadowing property \cite{Walters1978}. This, combined with Proposition \ref{prop:partial-shadow=>chain-mixing} and \cite[Theorem 45]{KLO}, provides us with the following:
\begin{cor}\label{cor:Partial-shadowing=>Specification}
    Let $(X,T)$ be a surjective topological dynamical system with the shadowing property and the partial shadowing property. Then $(X,T)$ has the specification property.
\end{cor}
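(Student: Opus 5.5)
The plan is to chain together the results cited just before the statement. The hypotheses give three things: $(X,T)$ is surjective, it has the (classic) shadowing property, and it has the partial shadowing property. Proposition \ref{prop:partial-shadow=>chain-mixing} uses only surjectivity and partial shadowing, so it applies directly and shows that $(X,T)$ is chain mixing.

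Next I would upgrade chain mixing to topological mixing using shadowing, following \cite{Walters1978}. The argument I have in mind runs as follows. Fix nonempty open sets $U,V$ and choose points $x\in U$, $y\in V$ together with $\eps>0$ such that $B(x,\eps)\subset U$ and $B(y,\eps)\subset V$. Let $\delta$ be the constant that shadowing provides for this $\eps$. By chain mixing there is $M$ such that for every $n\geq M$ we have a $\delta$-chain of length $n$ from $x$ to $y$. We extend this chain to an infinite $\delta$-pseudo-orbit by continuing with the true orbit of $y$. A shadowing point $z$ then satisfies $z\in B(x,\eps)\subset U$ and $T^{n-1}(z)\in B(y,\eps)\subset V$. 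Since this works for every $n\geq M$, we get $T^{m}(U)\cap V\neq\emptyset$ for all $m\geq M-1$, which is topological mixing.

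Finally, \cite[Theorem 45]{KLO} states that a surjective system with shadowing that is topologically mixing has the specification property; I am assuming this is the relevant content of that result, as the citation above suggests. Applying it completes the chain of implications. There is no real obstacle here. The only points to check are that the hypotheses of the cited theorem match ours, namely surjectivity together with shadowing, and that the infinite-extension step of the pseudo-orbit is handled correctly in the mixing argument.
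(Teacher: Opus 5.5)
Your proposal is correct and follows the paper's route exactly. Proposition~\ref{prop:partial-shadow=>chain-mixing} gives chain mixing, shadowing upgrades chain mixing to topological mixing, and \cite[Theorem 45]{KLO} then yields specification; the only difference is that the paper just cites \cite{Walters1978} for the middle step, whereas you spell out the standard argument (extend the $\delta$-chain by the true orbit of $y$ and shadow it), which is valid.
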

\begin{prop}\label{prop:partial-spec=>top.mixing}
    Let $(X,T)$ be a surjective topological dynamical system with the partial specification property. Then $(X,T)$ is topologically mixing.
\end{prop}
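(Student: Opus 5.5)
We argue directly from the definition of partial specification, without passing through chain mixing. Fix nonempty open sets $U,V\subset X$. Choose points $x\in U$, $y\in V$ and $\eps>0$ so small that the closed balls $\overline{B}(x,2\eps)\subset U$ and $\overline{B}(y,2\eps)\subset V$. Using uniform continuity of $T$, choose a length $L\in\N$ and shrink $\eps$ further so that the following holds: whenever $\rho(T^n(w),T^n(x))<\eps$ for some $0\le n<L$, we have $\rho(w,x)$ small enough to put $w\in U$. The naive version of this fails, because tracing at time $n$ only controls the future of $w$, not the past.

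So we instead arrange the specification so that the traced orbit segments have length exactly $1$ in a suitable sense. Take $\eps<1/2$ and use segments $T^{[a,a+1)}$ consisting of a single point. Then partial tracing with proportion $>1-\eps>1/2$ of a segment of length $1$ forces exact $\eps$-tracing at that point.

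Let $M,N$ be given by Definition \ref{def:partial-specification-of-periods-P} for this $\eps$, for some admissible $P$. For $n\ge M+1$ consider the $M$-spaced specification consisting of two segments:
\[
T^{[0,1)}(x),\qquad T^{[n,n+1)}(x_2),
\]
where $x_2$ satisfies $T^n(x_2)=y$. Such an $x_2$ exists by surjectivity of $T$, and this is the only place surjectivity is needed. The spacing condition is $n-1\ge M$. Definition \ref{def:partial-specification-of-periods-P} then supplies a point $z$ (of some period $p\in P$ with $p\ge\max\{N,(1+\eps)(n+1)\}$, which always exists as long as $P$ is infinite) that $\eps$-partially traces the specification. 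Each segment has length $1$ and $|\Lambda_i|>1-\eps>0$, so $\Lambda_i$ is nonempty. Hence $\rho(z,x)<\eps$ and $\rho(T^n(z),y)<\eps$. Therefore $z\in U$ and $T^n(z)\in V$, that is, $T^n(U)\cap V\ne\emptyset$ for all $n\ge M+1$. This is exactly topological mixing.

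The main point to verify is that $P$ can be taken infinite, so that a period $p\ge(1+\eps)(n+1)$ is available for every $n$. If $P$ were finite, the definition would be vacuous for large $n$. This is where I expect the only subtlety. We would argue that the definition is meant with $P$ infinite, or handle it by taking $c\in\N$ arbitrary and using the definition for that $c$. Beyond this, the argument uses nothing from the earlier propositions.
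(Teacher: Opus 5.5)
Your argument is correct and is essentially the paper's own proof. It uses two one-point segments at times $0$ and $n$, surjectivity to pull $y$ back to $x_2$, and $\eps<1/2$ so that partial tracing of a length-one segment is genuine $\eps$-tracing; your indexing ($a_2=n$, $n\ge M+1$, $T^n(x_2)=y$) is actually tidier than the paper's, which places the second segment at $a_2=n+1$ but then reads off closeness at time $n$. Like the paper, you must read Definition \ref{def:partial-specification} as requiring $P$ to be infinite: your fallback of varying $c$ would not help, since under the literal wording $P=\emptyset$ makes the property vacuous for every $c$.
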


\begin{proof}
    Let $U,V\subset X$ be nonempty open sets. Fix $x\in U$ and $y\in V$. Let $1/2>\eps>0$ be such that 
    \[
    B(x,\eps)=\{z\in X \mid \rho(x,z)<\eps\} \subset U \quad \text{and} \quad B(y,\eps)=\{z\in X \mid \rho(z,y)<\eps\}\subset V.
    \]
    Let $M\geq 1$ be such that any $M$-spaced specification can be $\eps$-partially traced by a point in $X$. Let $n\geq M$ be sufficiently large, as in Definition \ref{def:partial-specification-of-periods-P}. Using the surjectivity of $T$ find $y_0\in X$ such that $T^n(y_0)=y$.
    
     Consider the specification $\{T^{[a_i,b_i)}(x_i)\}_{i=1}^2$ where $a_1=0$, $b_1=1$, $a_2=n+1$, $b_2=n+2$, and $x_1=x$, $x_2=y_0$. Using the partial specification we obtain $z\in X$ such that $\rho(z,x)<\eps$ and $\rho(T^n(z),T^n(y_0))<\eps$. Thus, we have found $z\in T^{-n}(V)\cap U$ and $(X,T)$ is topologically mixing. 
\end{proof}

\begin{prop}\label{prop:infinite-partial-spec}
Let $(X,T)$ have the partial specification property. Then for every $\eps>0$ there exists $M\geq 1$ such that each infinite $M$-spaced specification $\{T^{[a_i,b_i)}(x_i)\}_{i=1}^\infty$ can be $\eps$-partially traced by a point $z \in X$. 
\end{prop}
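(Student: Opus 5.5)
The plan is a standard compactness argument, modelled on \cite[Lemma 17]{KLO2}: approximate the infinite specification by its finite initial truncations, trace each truncation using the finite partial specification property, and pass to a limit point. Fix $\eps>0$. I would not apply the definition with $\eps$ itself, but with a smaller $\eps'$ (say $\eps'=\eps/2$). The resulting $M$ (and $N$) from Definition \ref{def:partial-specification-of-periods-P} is used with any $c$, say $c=1$, and some admissible set of periods $P$. Note that $P$ must be infinite, or at least unbounded; otherwise the condition $n\geq\max\{N,(1+\eps')b_r\}$ could fail for long specifications. I would either argue that this is implicit in the definition, or simply note that the definition is vacuous otherwise.

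Now let $\{T^{[a_i,b_i)}(x_i)\}_{i=1}^\infty$ be $M$-spaced, and for each $r$ consider the finite truncation $\{T^{[a_i,b_i)}(x_i)\}_{i=1}^r$. It is still $M$-spaced. Choosing $n_r\in P$ with $n_r\geq\max\{N,(1+\eps')b_r\}$, we get a point $z_r$ (periodic, though periodicity is irrelevant here) which $\eps'$-partially traces the first $r$ segments. That is, for every $i\leq r$,
\[
\left|\{a_i\leq n<b_i \mid \rho(T^n(z_r),T^n(x_i))<\eps'\}\right|>(1-\eps')(b_i-a_i).
\]
By compactness, pass to a subsequence $z_{r_k}\to z$.

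The key step, and the only subtle one, is transferring the strict inequality to the limit. Fix $i$. Since $[a_i,b_i)$ is a finite set, it has only finitely many subsets, so by passing to a further subsequence (via a diagonal argument over $i$) I may assume that the sets
\[
\Lambda_i^{(k)}=\{a_i\leq n<b_i \mid \rho(T^n(z_{r_k}),T^n(x_i))<\eps'\}
\]
equal a fixed set $\Lambda_i$ for all $k$ with $r_k\geq i$. By continuity of $T^n$, for $n\in\Lambda_i$ we get $\rho(T^n(z),T^n(x_i))\leq\eps'<\eps$. Hence $\Lambda_i\subset\{a_i\leq n<b_i \mid \rho(T^n(z),T^n(x_i))<\eps\}$, and
\[
|\Lambda_i|>(1-\eps')(b_i-a_i)\geq(1-\eps)(b_i-a_i),
\]
which is exactly what is required for $\eps$-partial tracing of the infinite specification. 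Using $\eps'<\eps$ is what handles the loss of strictness in the limit. The diagonal subsequence is routine, since for each $i$ there are only finitely many candidate sets. I expect no real obstacle beyond this bookkeeping and the remark about $P$ being unbounded.
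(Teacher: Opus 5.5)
Your proposal is correct and follows essentially the same route as the paper: trace each finite truncation at scale $\eps/2$, use the fact that each $[a_i,b_i)$ has only finitely many subsets to stabilize the tracing sets along a diagonal subsequence, and pass to a limit point by compactness. The only difference in order is that you extract the convergent subsequence before diagonalizing, which changes nothing. Two points the paper leaves implicit are made precise in your version: the strict inequality is handled explicitly via $\rho(T^n(z),T^n(x_i))\leq\eps/2<\eps$, and you note that the set of periods $P$ must tacitly be unbounded for finite tracing of arbitrarily long truncations to be available.
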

\begin{proof}
Fix $\eps>0$. Find $M\geq 1$ such that every finite $M$-spaced specification can be $\eps/2$-partially traced by a point in $X$. Let $\{T^{[a_i,b_i)}(x_i)\}_{i=1}^\infty$ be an \emph{infinite} $M$-spaced specification. For $r\geq 1$, let $z_r\in X$ be a point that $\eps/2$-partially traces the  $M$-spaced specification $\{T^{[a_i,b_i)}(x_i)\}_{i=1}^r$. We know that $[a_1,b_1)$ has finite length and there are only finitely many possible choices of $\Lambda_1^{(1)}\subset [a_1,b_1)$ satisfying
\begin{equation*}
	\dfrac{\Lambda_1^{(1)}}{b_1-a_1}>1-\eps.
\end{equation*}
Therefore, we can find $\Lambda_1^{(1)}\subset [a_1,b_1)$ and a subsequence $\{z_r^{(1)}\}_{r=1}^\infty\subset \{z_r\}_{r=1}^\infty$ such that for each $r\geq 1$ we have 
\begin{equation*}
	\dfrac{\Lambda_1^{(1)}}{b_1-a_1}>1-\eps \text{ and } \Lambda_1^{(1)}=\{ a_1\leq j < b_1 \mid \rho(T^j(z_r^{(1)}),T^j(x_1))<\eps\}.
\end{equation*}
Now, by repeating the same procedure, for every $i\geq 2$ we find $\Lambda_i^{(i)}\subset [a_i,b_i)$ and a subsequence $\{z_r^{(i)}\}_{r=1}^\infty\subset \{z_r^{(i-1)}\}_{r=1}^\infty$ such that
\begin{equation*}
	\dfrac{\Lambda_i^{(i)}}{b_i-a_i}>1-\eps \text{ and } \Lambda_i^{(i)}=\{ a_i\leq j < b_i \mid \rho(T^j(z_r^{(i)}),T^j(x_i))<\eps\}.
\end{equation*}
Therefore, for $n\geq1$, we obtain a point $z_n^{(n)}$ that partially traces the specification $\{T^{[a_i,b_i)}(x_i)\}_{i=1}^{k}$ along $\Lambda_1^{(1)},\ldots,\Lambda_k^{(k)}$ for all $k\leq n $. Now, we pass to the subsequence of $\{z_n^{(n)}\}_{n=1}^\infty$ if necessary to find $z\in X$ such that $\lim_{n\to\infty}z_n^{(n)}=z$ and the point $z$ $\eps$-partially traces the \emph{infinite} specification $\{T^{[a_i,b_i)}(x_i)\}_{i=1}^\infty$.
\end{proof}

%
%
%

%
\begin{thm}\label{thm:partial-spec=>asp}
    If $(X,T)$ has the partial specification property, then $(X,T)$ has the average shadowing property.
\end{thm}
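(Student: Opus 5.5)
Fix $\eps>0$. The plan is to cut a $\delta$-average pseudo-orbit into consecutive blocks of a fixed length $m$. On each block whose jumps are all small, the pseudo-orbit is replaced by the true orbit of its first point, truncated by $M$ steps to leave a gap. These truncated segments form an infinite $M$-spaced specification, which Proposition~\ref{prop:infinite-partial-spec} lets us $\eps$-partially trace by one point $z$; we then check that $\rho_B(\seqz_T,\seqx)$ is small.

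\emph{Surjectivity.} Placing orbit segments at arbitrary times requires surjectivity, so I would first show that partial specification forces it. Take the one-segment specification $T^{[0,1)}(x)$ for an arbitrary $x\in X$. Definition~\ref{def:partial-specification-of-periods-P} gives a periodic point $z$ that partially traces it. Because the segment has length one, partial tracing here means $\rho(z,x)<\eps$. Hence periodic points are dense in $X$. Every periodic point lies in $T(X)$, and $T(X)$ is closed, so $T(X)=X$.

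\emph{Choice of constants.} Let $\eps'=\eps/10$, and take $M$ from Proposition~\ref{prop:infinite-partial-spec} for $\eps'$. Choose $m\in\N$ with $M/m<\eps'$. By uniform continuity of $T,\dots,T^m$, pick $\eta>0$ with the following property: every $\eta$-chain $(y_0,\dots,y_{m-1})$ satisfies $\rho(T^j(y_0),y_j)<\eps'$ for all $j<m$. This is a finite-time argument that accumulates errors through the moduli of continuity. Finally let $\delta=\eps'\eta/m$.

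\emph{Construction and tracing.} Let $\seqx$ be a $\delta$-average pseudo-orbit. Call the block $B_k=[km,(k+1)m)$ \emph{good} if $\rho(T(x_i),x_{i+1})<\eta$ for all $km\le i<(k+1)m-1$. For $n$ large, the average condition says that $[0,n)$ contains at most $\delta n/\eta$ indices with a jump $\ge\eta$. Each such index spoils at most one block, so the bad blocks inside $[0,n)$ cover at most $m\delta n/\eta=\eps' n$ positions. For each good block $B_k$, use surjectivity to pick $w_k$ with $T^{km}(w_k)=x_{km}$. The segments $T^{[km,(k+1)m-M)}(w_k)$ over good $k$ form an $M$-spaced specification. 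If there are only finitely many good blocks, their contribution has density zero and the bound below is immediate, so assume the specification is infinite. Proposition~\ref{prop:infinite-partial-spec} gives a point $z$ that $\eps'$-partially traces it.

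\emph{Estimate.} Fix a position $j$ in a good block, not in the final $M$ positions of that block, and in the tracing set $\Lambda$. There $\rho(T^j z,x_j)\le \rho(T^jz,T^jw_k)+\rho(T^jw_k,x_j)<2\eps'$. All remaining positions fall into one of three classes, each contributing distance at most $1$ since the diameter is $1$:
\begin{itemize}
\item bad blocks, with density at most $\eps'$;
\item gaps, with density at most $M/m<\eps'$;
\item non-traced positions inside segments, with density at most $\eps'$.
\end{itemize}
Taking $\limsup$ of the Ces\`aro averages gives $\rho_B(\seqz_T,\seqx)\le 5\eps'<\eps$. The steps needing the most care are the surjectivity observation and the uniform count of bad blocks, which works because each large jump ruins only one block of fixed length $m$.
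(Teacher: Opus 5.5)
Your argument is correct, and it takes a genuinely different route from the paper's. The paper traces \emph{every} block of the average pseudo-orbit. It first turns the $\delta$-average pseudo-orbit into $\delta_1$-partial pseudo-orbits on all long windows (Proposition~\ref{prop:partial-p.o-avg-p.o}). It then traces each window by a point $z^{(j)}$ using partial shadowing, imported from Yaari's Lemma 4.7, and finally glues the segments with Proposition~\ref{prop:infinite-partial-spec}.

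You bypass partial shadowing entirely. You keep only the blocks that are genuine $\eta$-chains and trace each of them by the true orbit of its initial point, using uniform continuity of $T,\dots,T^m$. The remaining blocks are discarded, and their density is controlled by a Markov-type count: each jump $\ge\eta$ ruins at most one block of fixed length $m$. This makes the proof self-contained apart from the gluing proposition. It also shows that the conclusion of Proposition~\ref{prop:infinite-partial-spec}, together with surjectivity, already yields average shadowing.

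Your surjectivity step also fills a point the paper passes over. In step 5 the paper uses the segment $T^{[a_j,b_j)}(z^{(j)})$, although $z^{(j)}$ traces the window starting at time $0$. So one actually needs a preimage $w_j$ with $T^{a_j}(w_j)=z^{(j)}$, and the theorem does not assume surjectivity. Your density-of-periodic-points argument supplies exactly this; like Proposition~\ref{prop:infinite-partial-spec}, it implicitly needs $P$ to be unbounded.

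One small inaccuracy: if there were only finitely many good blocks, the bound would not be ``immediate''. Rather, that case cannot occur, because the bad blocks have upper density at most $\eps'<1$.
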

\begin{proof}
    Let $\eps>0$. We want to find $\delta>0$ such that every $\delta$-average pseudo-orbit can be $\eps$-shadowed in average.
    
    \noindent 1. Using Proposition~\ref{prop:infinite-partial-spec}, we find $M\geq 1$ such that every $M$-spaced specification $\{T^{[a_i,b_i)]}(x_i) \}_{i=1}^\infty$ can be $\eps/8$-partially traced by a point in $X$.
    
    \noindent 2. By \cite[Lemma 4.7]{Yaari} we have that partial specification implies partial shadowing. Hence, we find $\delta_1>0$ such that every $\delta_1$ partial pseudo-orbit $\{z_i\}_{i=0}^r$ can be $\eps/8$-partially traced by a point in $X$.
    
    \noindent 3. Let $\delta<\delta_1^2$ and let $\bar{x}=\{x_i\}_{i=0}^\infty$ be a $\delta$-average pseudo-orbit. Using Proposition~\ref{prop:partial-p.o-avg-p.o} we see that there is $N_1\geq 1$ such that for every $r\geq N_1$ and $k\geq 0$ the sequence $\{x_{i+k}\}_{i=0}^r$ is a $\delta_1$-partial pseudo-orbit.
    
    \noindent 4. Fix $r\geq N_1$ such that $\dfrac{M+r\frac{\eps}{2}}{M+r}<\dfrac{3}{4}\eps$. For each $j\geq 0$ define $a_j=j\cdot (M+r)$ and $b_j=a_j+r$. Then for every $j\geq 0$ we find $z^{(j)}\in X$ such that the $\delta_1$-partial pseudo-orbit $\{x_{i+a_j}\}_{i=0}^r$ is $\eps/8$-partially traced by $z^{(j)}$.
    
    \noindent 5. Observe that $\left\{T^{[a_j,b_j)}(z^{(j)})\right\}_{j=0}^\infty$ is an $M$-spaced specification. Therefore, there exists $z\in X$ that $\eps/8$-partially traces it.
    
    \noindent 6. Let $k\geq 0$. Consider the set of $M+r$ indices $\{a_k, a_k+1, \ldots , a_{k+1}-1\}$. Define
    \[ A=\{ a_k\leq n<b_k\:\colon\: \rho (T^n(z),T^n(z^{(k)}))<\frac{\eps}{8}\} .\]
    By point (5) of this proof $|A|\geq r(1-\frac{\eps}{8})$. Consequently, of the first $r$ terms of $A$ no more than $r\frac{\eps}{8}$ do not belong to $A$. Define
    \[ B=\{ a_k\leq n<b_k\:\colon\: \rho (T^n(z^{(k)}),x_n)<\frac{\eps}{8}\} .\]
    By point (4) of the proof $|B|\geq r(1-\frac{\eps}{8})$. Consequently, $|A\cap B|\geq r(1-\frac{\eps}{4})$ and for at least that many indices we have $\rho (T^n(z),x_n)<\frac{\eps}{4}$. Recall that diam$X=1$ and hence for the rest of the indices we have $\rho (T^n(z),x_n)\leq 1$. It follows that
    \[\frac{1}{M+r} \sum_{a_k\leq n<a_{k+1}}\rho (T^n(z),x_n)\leq \frac{1}{M+r}\left(r(1-\frac{\eps}{4})\frac{\eps}{4}+M+r-r(1-\frac{\eps}{4})\right)<\frac{M+r\frac{\eps}{2}}{M+r}<\frac{3}{4}\eps.\]
Given that the length of each such segment is constant, we finally obtain
    \[
        \rho_B\left(\bar{x},\underline{z}_T\right)=\limsup\limits_{n\to\infty}\dfrac{1}{n}\sum_{i=0}^{n-1} \rho(x_i,T^i(z))<\eps.\qedhere
    \]
\end{proof}


We now turn our attention to:

\begin{thm}\cite[Theorem 8.2]{BJK}
If $(X,T)$ is a surjective topological dynamical system with the vague specification property, then the ergodic measures are dense among all invariant measures.
\end{thm}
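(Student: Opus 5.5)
We need to prove that a surjective system with vague specification (equivalently, the asymptotic average shadowing property, by \cite[Theorem 4.6]{CT}) has ergodic measures dense in $\MT$. The plan is the standard one: approximate an arbitrary invariant measure first by a finite convex combination of ergodic measures with rational weights, then concatenate generic orbit pieces into a vague pseudo-orbit and trace it.

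\textbf{Step 1 (reduction).} Since ergodic decomposition shows that finite rational convex combinations of ergodic measures are dense in $\MT$ (weak$^*$ topology), it suffices to approximate $\mu=\sum_{k=1}^m \frac{p_k}{q}\mu_k$, with each $\mu_k$ ergodic. Pick $x_k$ generic for $\mu_k$, which is possible because ergodic measures have generic points.

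\textbf{Step 2 (the pseudo-orbit).} Build a sequence $\seqx\in X^\infty$ by concatenating orbit segments: in the $L$-th block, of length $qL$, run the orbit of $x_k$ for $p_kL$ steps, for $k=1,\dots,m$, with $L$ increasing slowly. A sequence whose jumps $T(x_i)\neq x_{i+1}$ occur on a set of density zero is a vague pseudo-orbit, since $S^n(\seqx)$ lies in any neighbourhood of $X_T$ whenever no jump occurs in a long window after $n$. To ensure that the empirical measures of $\seqx$ converge to $\mu$ rather than merely accumulate, I will use the same starting segment $T^{[0,p_kL)}(x_k)$ in each block and let $L\to\infty$. Genericity then makes each block's empirical measure close to $\mu$, and since the blocks grow only slowly, the Cesàro averages along $\seqx$ converge to $\mu$. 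Surjectivity is used here to extend the construction backward where needed, so that the trace can be chosen in the region matching $\MT$; it is also needed for the cited equivalences.

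\textbf{Step 3 (tracing and ergodicity).} Vague specification gives $z$ that tracks $\seqx$ on a set of density one at every scale $\eps$. Hence $\rho_B(\seqz_T,\seqx)=0$, and $z$ is generic for $\mu$. However, $\mu$ itself need not be ergodic, so $z$ alone does not finish the argument.

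This is the main obstacle: producing an \emph{ergodic} measure near $\mu$. I would follow \cite{BJK}. Instead of a single generic point, construct a point $z$ whose orbit closure supports only measures close to $\mu$. Do this by making $\seqx$ uniform in the Banach-density sense, with every window of length $\ge N$ having empirical measure within $\eta$ of $\mu$, which is arranged by repeating a fixed block $W$ of length $qL$ periodically. Then apply the asymptotic average shadowing property. Windowed averages along $z$ stay close to those of $\seqx$ except on a density-zero set of windows, and by passing to a subsequence one extracts a $T$-invariant closed set $Y\subset\overline{\theta(z)}$ all of whose invariant measures lie within $2\eta$ of $\mu$; I expect this extraction to be the technically delicate part. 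Any ergodic measure on $Y$, which exists by Krylov–Bogolyubov together with ergodic decomposition, is then within $2\eta$ of $\mu$, which proves density.
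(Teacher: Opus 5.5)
There is a genuine gap in Step 3, where you make $\seqx$ uniform by repeating a fixed block $W$ periodically and then apply the asymptotic average shadowing property. A periodic concatenation of the segments $T^{[0,p_kL)}(x_k)$ repeats its $m$ jumps $\rho\bigl(T(x_i),x_{i+1}\bigr)$ in every period of length $qL$. Unless they all vanish, $\frac1n\sum_{i<n}\rho(T(x_i),x_{i+1})$ therefore converges to a positive constant. So this $\seqx$ is neither an asymptotic average pseudo-orbit nor a vague pseudo-orbit, and neither property says anything about it.

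The two requirements also cannot be combined. Density-zero jumps force the blocks to grow. Then length-$N$ windows sit inside single orbit segments of the $x_k$, which is exactly the non-uniformity you wanted to remove.

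What the periodic $\seqx$ actually is, is a $\delta$-average pseudo-orbit with $\delta$ of order $m/(qL)$, so the tool you need is the average shadowing property. This is where surjectivity really enters, not to ``extend the construction backward''. Vague specification is the AASP by \cite[Theorem 4.6]{CT}. For surjective systems the AASP implies average shadowing by \cite[Theorem 4.4]{CT}, and that gives chain mixing by \cite[Lemma 3.1]{KKO}. As the paper notes, these are exactly the ingredients the proof in \cite{BJK} runs on, together with \cite[Proposition 8.1]{BJK}.

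Average shadowing, however, only yields $z$ with $\rho_B(\seqz_T,\seqx)<\eps$, not $=0$. The badly traced length-$N$ windows then form a set of small but positive upper density, which may meet every interval of some bounded length. There need be no long runs of good windows along which to take a limit of $T^{s_i}(z)$, so your extraction of $Y$ is unsupported. With density zero it would have been routine, since such a set has arbitrarily long gaps; the real difficulty is not where you placed it.

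The standard repair drops $Y$ altogether.
\begin{itemize}
\item Let $D$ be the bounded-Lipschitz metric on probability measures, which is convex. Put $F(y)=D\bigl(\frac1N\sum_{j<N}\delta_{T^j(y)},\mu\bigr)$, a continuous function on $X$.
\item Every length-$N$ window of $\seqx$ has empirical measure within $\eta$ of $\mu$. The empirical measures of $z$ and $\seqx$ over $[n,n+N)$ are within $\frac1N\sum_{j<N}\rho(T^{n+j}(z),x_{n+j})$ of each other.
\item Hence every limit point $\nu$ of the empirical measures of $z$ satisfies $\int F\,d\nu\le\eta+\eps$.
\item Some ergodic component $\nu'$ of $\nu$ then has $\int F\,d\nu'\le\eta+\eps$.
\item Invariance gives $\nu'=\int\frac1N\sum_{j<N}\delta_{T^j(y)}\,d\nu'(y)$, so convexity of $D$ yields $D(\nu',\mu)\le\eta+\eps$.
\end{itemize}
This is the missing step that actually produces an ergodic measure near $\mu$.
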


We aim to replace the assumption of the vague specification property in this theorem with the average shadowing property. To this end observe that the authors only need chain mixing, the average shadowing property, and \cite[Proposition 8.1]{BJK} in their proof. But surjectivity of $T$ plus the average shadowing property implies chain mixing by \cite[Lemma 3.1]{KKO}. As for Proposition 8.1, its proof, again, uses only chain mixing and the average shadowing property. Hence, while not stated explicitely, \cite{BJK} contains the proof of:

\begin{thm}\label{thm:asp=>measure-density}
If $(X,T)$ is a surjective topological dynamical system with the average shadowing property, then the ergodic measures are dense among all invariant measures.
\end{thm}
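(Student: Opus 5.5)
The plan follows the argument of \cite[Section 8]{BJK}. We trace which hypotheses are actually used there and check that surjectivity together with the average shadowing property supplies each of them.

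First, surjectivity and the average shadowing property give chain mixing by \cite[Lemma 3.1]{KKO}. We fix a compatible metric on the space $\MT$ of invariant measures, for instance a Wasserstein-type or Prokhorov metric. Since ergodic decompositions can be approximated weakly by finite convex combinations with rational weights, it suffices to approximate a measure of the form $\mu=\sum_{k=1}^m \lambda_k\mu_k$, where each $\mu_k$ is ergodic and each $\lambda_k$ is rational, by ergodic measures. For each $k$ we choose a $\mu_k$-generic point $y_k$. We then build a sequence $\seqx$ by concatenating long orbit blocks $T^{[0,L)}(y_k)$, with the $k$-th block repeated in proportion $\lambda_k$ within a fixed period. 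Consecutive blocks are joined by $\delta$-chains of bounded length $M$, which exist by chain mixing. The result is a periodic $\delta$-pseudo-orbit except at sparse junctions, hence a $\delta$-average pseudo-orbit once $L\gg M$. Its empirical measures converge to a measure close to $\mu$.

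Next we apply the average shadowing property. This gives $z$ with $\rho_B(\seqz_T,\seqx)<\eps$. Because $\rho_B$ and $\pi_B$ are uniformly equivalent (\cite[Lemma 5.3]{BJK}), any limit of the empirical measures of $z$ lies close to the corresponding limit for $\seqx$. This is the content of \cite[Proposition 8.1]{BJK}: Besicovitch-close sequences have close sets of empirical limit measures, and its proof uses only this estimate together with chain mixing. So every invariant measure in the closed set $V(z)$ of limit points of empirical measures of $z$ is close to $\mu$.

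The main obstacle is that $V(z)$ need not contain an ergodic measure close to $\mu$: $z$ need not be generic, and its limit measures may be nonergodic. We first pass to an ergodic component of some $\nu\in V(z)$, but this alone is insufficient. Following \cite{BJK}, we exploit the periodic structure of $\seqx$. Its single periodic pattern is visited uniformly, so each ergodic measure in $\overline{\{T^n z\}}$ arising from the orbit closure still sees the pattern in the right proportions. To make this precise we use the fact that $\pi_B$-closeness passes to a shift-invariant estimate on all windows, and we choose $\delta$ so that the estimate holds uniformly in the window start $k$, which is exactly how a $\delta$-average pseudo-orbit is defined. Then every ergodic measure supported on the orbit closure of $z$ and charging the relevant sets is within $O(\eps)$ of $\mu$, and we pick one such measure. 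I expect this uniformity step to be the delicate one. Following \cite{BJK}, the remedy is the uniformity over $k$ built into the definition of a $\delta$-average pseudo-orbit, which neither surjectivity nor vague specification is needed to provide. Letting $\eps\to0$ yields density.
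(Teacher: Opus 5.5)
Your top-level reduction matches the paper's: you get chain mixing from surjectivity and the average shadowing property via \cite[Lemma 3.1]{KKO}, and then run the argument of \cite[Section 8]{BJK}. The argument you reconstruct, however, breaks at the step you yourself flag as delicate.

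The average shadowing property gives only $\rho_B(\seqz_T,\seqx)<\eps$, a $\limsup$ of Ces\`aro averages started at time $0$. The uniformity in the window start $k$ is part of the definition of a $\delta$-average pseudo-orbit, so it is a hypothesis on $\seqx$. It is not transferred to the shadowing point $z$. Neither $\rho_B$-closeness nor $\pi_B$-closeness gives estimates uniform over windows; that would be a Weyl-type condition. Since $\rho_B$ is blind to what happens on sets of times of zero density, $z$ may make longer and longer density-zero excursions, e.g.\ near a fixed point. So the orbit closure of $z$ can support ergodic measures far from $\mu$. Your claim that every ergodic measure on the orbit closure of $z$ is within $O(\eps)$ of $\mu$ is therefore false, and ``charging the relevant sets'' does not pick out a good one. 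As written, no ergodic measure close to $\mu$ has been produced.

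The missing idea is a selection step. Periodicity of $\seqx$ should enter through a joining, not through uniform estimates.
\begin{itemize}
\item Let $P$ be the period of $\seqx$, let $R$ be the rotation $j\mapsto j+1$ of $\mathbb{Z}/P\mathbb{Z}$, and let $\lambda$ be a limit point of the empirical measures of $(z,0)$ under $T\times R$.
\item Since $F(y,j)=\rho(y,x_j)$ is continuous, $\int F\,d\lambda\le\rho_B(\seqz_T,\seqx)<\eps$. Hence some ergodic component $\lambda'$ of $\lambda$ also satisfies $\int F\,d\lambda'<\eps$.
\item The marginal of $\lambda'$ on $\mathbb{Z}/P\mathbb{Z}$ is uniform, since that is the only $R$-invariant measure. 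Its marginal $\nu'$ on $X$ is $T$-ergodic, being a factor of an ergodic system.
\item Pushing the second coordinate forward by $j\mapsto x_j$ turns $\lambda'$ into a coupling of $\nu'$ with $\frac{1}{P}\sum_{j<P}\delta_{x_j}$ of cost less than $\eps$.
\end{itemize}
So $\nu'$ is an ergodic measure within $\eps$, in the Wasserstein metric, of a measure that is close to $\mu$ by construction. With this replacing your last paragraph, the proof closes.

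One further point: your estimate on $V(z)$ follows from Lipschitz test functions alone and needs neither $\pi_B$ nor chain mixing. Your description of \cite[Proposition 8.1]{BJK} as this elementary estimate therefore does not fit the paper's remark that the proposition's proof uses chain mixing and the average shadowing property.
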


Combining this with Theorem \ref{thm:partial-spec=>asp} we obtain:

\begin{cor}\label{cor:partial-spec=>measure-density}
    Let $(X,T)$ be a surjective topological dynamical system with the partial specification property. Then the set of ergodic measures $\MT^e(X)$ is dense in the space of invariant measures $\MT(X)$.
\end{cor}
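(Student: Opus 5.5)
This follows by simply chaining Theorem~\ref{thm:partial-spec=>asp} with Theorem~\ref{thm:asp=>measure-density}; no new dynamical construction is needed. Let $(X,T)$ be surjective with the partial specification property.

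\textbf{Step 1.} By Theorem~\ref{thm:partial-spec=>asp}, $(X,T)$ has the average shadowing property. That theorem does not use surjectivity, so it applies directly. Its proof relies on Proposition~\ref{prop:infinite-partial-spec} to trace infinite specifications and on \cite[Lemma 4.7]{Yaari}, which gives partial shadowing from partial specification.

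\textbf{Step 2.} Surjectivity is unchanged, so $(X,T)$ is now a surjective system with the average shadowing property. Theorem~\ref{thm:asp=>measure-density} then yields that $\MT^e(X)$ is dense in $\MT(X)$ in the weak$^*$ topology.

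\textbf{Main obstacle.} All the real content sits in Theorem~\ref{thm:asp=>measure-density}, whose justification relies on the proofs of \cite[Theorem 8.2, Proposition 8.1]{BJK} using only chain mixing and average shadowing. The one point I would check explicitly is where chain mixing comes from. It follows from surjectivity together with average shadowing via \cite[Lemma 3.1]{KKO}. It also follows independently from the corollary to Proposition~\ref{prop:partial-shadow=>chain-mixing}, which gives chain mixing directly from surjectivity and partial specification. Since both hypotheses needed by the argument of \cite{BJK} are available, the density of ergodic measures follows.
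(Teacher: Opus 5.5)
Your proposal is correct and is the paper's own argument: the corollary is obtained by composing Theorem~\ref{thm:partial-spec=>asp} with Theorem~\ref{thm:asp=>measure-density}, where the latter is read off from the proofs in \cite{BJK} once chain mixing is supplied by \cite[Lemma 3.1]{KKO}. Your second source of chain mixing, the corollary following Proposition~\ref{prop:partial-shadow=>chain-mixing}, is a harmless redundancy that the argument does not need.
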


\begin{example}
    Besicovitch completeness does not imply the AASP. The simplest example is an identity on a two-point space.

    For a nontrivial example on an infinite space, one can take $A=\{0,1\}$ and the subshift $X=\{ 0^n1^\infty \colon n\geq0\} \cup\{0^\infty\}\subset A^\infty$. Observe that for $p,q\in X$ $$\rho_B^\sigma (p,q)=\left\{\begin{array}{cl}
       0  & p=q=0^\infty \mbox{ or } p\neq 0^\infty\neq q, \\
       1  & \mbox{otherwise}
    \end{array}\right.$$
    and therefore $(X,\sigma)$ is Besicovitch complete. It does not have the AASP, however, as the sequence
    $$0^\infty,1^\infty,0^\infty,0^\infty,1^\infty,1^\infty,0^\infty,0^\infty,0^\infty,0^\infty,1^\infty,1^\infty,1^\infty,1^\infty,\ldots,\underbrace{0^\infty,\ldots,0^\infty}_{2^n},\underbrace{1^\infty,\ldots,1^\infty}_{2^n},\ldots$$ is an asymptotic average pseudo-orbit of $\sigma$ that is not asymptotically shadowed in average by any of its true orbits.

\end{example}

%

\begin{example}\label{exm:not-Besicovitch-complete}
There exists a topological dynamical system $(X,T)$ with compact $X$ that is not Besicovitch complete.

Consider the cylinder $C=\{(z,t)\in\mathbb{C}\times\R\colon |z|=1\}$ with the geodesic metric derived from the euclidean metric in $\R^3$. Define $S_0=\{(z,0)\in C: z\in\mathbb{C}\}$ and for $n\geq 1$ let $S_n=\{(z,1/n)\in C: z\in\mathbb{C}\}$.

Define the sequence $\{r_n\}_{n=1}^\infty$ as $0, 1/2, 0, 1/4, 2/4, 3/4, 0, 1/8, \ldots$ (we write in increasing order all fractions of the form $k/2^n$, where $0\leq k<2^n$, starting with $n=1$ and increasing $n$ by one once we are done).

For $n\geq 1$ let $A_n\subset S_n$ be an arc of length 1 with the left end point $(e^{2\pi i r_n},1/n)$ and the right end point $(e^{2\pi i r_n+i},1/n)$.

The space $X$ will consist of the whole $S_0$ and finitely many points chosen from each $A_n$ for $n\geq 1$. We will construct it in 4 steps.

\noindent {\bf 1.} We include the entire $S_0$ in $X$ as a set of fixed points of $T$.

\noindent {\bf 2.} Let $a_1^1$ be the left endpoint of $A_1$. For $k> 1$ we define:

\begin{itemize}
\item $a^1_k$ is the left end point of $A_k$,
\item $a^2_k$ is on $A_k$ at the distance $1/2$ to the right of $a^1_k$,
\item $a^3_k$ is on $A_k$ at the distance $1/4$ to the right of $a^2_k$,

\ldots

\item $a^k_k$ is on $A_k$ at the distance  $1/2^{k-1}$ to the right of $a^{k-1}_k$.
\end{itemize}
Additionally, for $k>1$ let $a^k_1$ be the right end point of $A_k$ and let $a^k_2, \ldots, a^k_{k-1}$ be positioned, in this order, at even intervals between $a^k_1$ and $a^k_k$.

We add $\{a^k_n\}_{k,n\geq 1}$ to $X$ and set $T(a^k_n)=a^k_{n+1}$ (see Figure 1).

\begin{figure}
    \centering
    \includegraphics[width=0.9\linewidth]{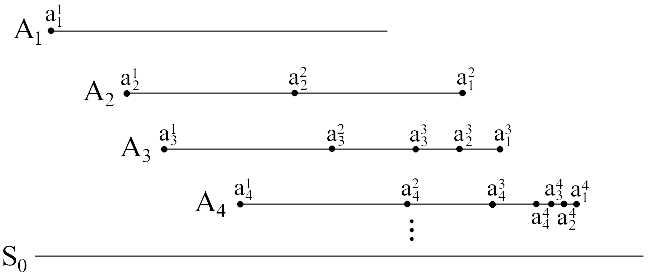}
    \caption*{Figure 2}
\end{figure}

\noindent {\bf 3.} Observe that, at the moment:

\noindent {\bf a)} $X$ is a compact space,

\noindent {\bf b)} the map $T$ is continuous,

\noindent {\bf c)} given any $k,m\geq 1$ the distance between $T^n(a^k_1)$ and $T^n(a^m_1)$ is, starting at some $n\geq 1$, constantly equal to $|(1/2)^{k-1}-(1/2)^{m-1}|$. Therefore, $\rho_B^T(a^k_1,a^m_1)=|(1/2)^{k-1}-(1/2)^{m-1}|$ and $\{a^n_1\}_{n\geq 1}$ is a Besicovitch-Cauchy sequence.

\noindent {\bf 4.} Finally, observe that no point in $X$ shadows $\{a^n_1\}_{n\geq 1}$ in the sense of Definition \ref{Besicovitchshadowing}:

\noindent {\bf a)} points of the form $a^n_1$ are excluded by virtue of $\rho_B^T(a^k_1,a^m_1)=|(1/2)^{k-1}-(1/2)^{m-1}|$,

\noindent {\bf b)} for points of the form $a^n_s$ with $s>1$ the distance between $T^r(a^n_s)$ and $T^r(a^n_1)$ tends to zero as $r$ tends to infinity; this coupled with $\rho_B^T(a^k_1,a^m_1)=|(1/2)^{k-1}-(1/2)^{m-1}|$ and the triangle inequality spoils any chance at shadowing,

\noindent {\bf c)} for $b\in S_0$ consider the orthogonal projections of $T^r(a^n_s)$ on $S_0$: by virtue of the choice of $r_n$ these points divide naturally into sets of finite blocks of length $2, 4, 8, \ldots$, with each block consisting of points that are positioned symmetrically around $S_0$. This forces $\rho_B^T(b,a^n_1)$ to be at least as much as the quarter of the length of $S_0$, as within each block the distance between $b$ and these projections will have this value as the average.
\end{example}

%

%
%
\begin{question}
    Does there exist a topological dynamical system $(X,T)$ that has the partial shadowing property but is not Besicovitch-complete?
\end{question}

%


    \bibliographystyle{plain}
    \bibliography{bibliography.bib}
\end{document}